\newcommand{\be}{\begin{equation}}
\newcommand{\ee}{\end{equation}}
\newcommand{\ba}{\begin{array}}
\newcommand{\ea}{\end{array}}
\newcommand{\bp}{\begin{proof}}
\newcommand{\ep}{\end{proof}}
\def\0{{\bf 0}}
\def\1{{\rm 1\hspace{-0.90ex}1} }
\newtheorem{theorem}{Theorem}
\newtheorem{definition}{Definition}
\newtheorem{proposition}{Proposition}
\newtheorem*{conclusion*}{Conclusion}
\renewcommand{\epsilon}{\varepsilon}
\def\twoplot[#1]#2#3#4#5{
\begin{figure}[hbt]
\begin{multicols}{2}
\begin{center}
    \includegraphics*[#1]{#2}
    \caption{\label{#2} #4}
\end{center}
\begin{center}
    \includegraphics*[#1]{#3}
    \caption{\label{#3} #5}
\end{center}
\end{multicols}
\end{figure}
}
\begin{document}
\bibliographystyle{plain}
\title[{IRREDUCIBILITY OF The THREE DIMENTIONAL aLBEVERIO- Rabanovich Representation OF THE PURE BRAID GROUP $p_3$}]
{IRREDUCIBILITY OF The THREE DIMENTIONAL aLBEVERIO- Rabanovich Representation OF THE PURE BRAID GROUP $P_3$}

\author[Hasan A. Haidar]{HASAN A. HAIDAR}
\author[MOHAMMAD N. ABDULRAHIM]{MOHAMMAD N. ABDULRAHIM}
\maketitle
\begin{abstract} We consider Albeverio- Rabanovich linear representation $\pi$ of the braid group $B_3$. After specializing the indeterminates used in defining the representation to non-zero complex numbers, we prove that the restriction of $\pi$ to the pure braid group $P_{3}$ of dimension three is irreducible.
\end{abstract}
\section{Introduction} Let $B_{n}$ be the braid group on $n$ strands. Consider the pure braid group $P_{n}$, the kernel of the obvious
surjective group homomorphism $B_{n}\rightarrow S_{n}$. Burau constructed a representations of $B_{n}$ of degrees $n$ and $n-1$, known as Burau and reduced Burau representations respectively \cite{Bu}. The reduced Burau representation of $B_{n}$ was proved to be irreducible \cite{F}. Using Burau unitarizable representation, Albeverio and Rabanovich presented a class of non trivial unitary representations for the braid groups $B_3$ and $B_4$ in the case where the dimension of the space is a multiple of $3$. 
Researchers gave a great value for representations of the pure braid group $P_n$. M.Abdulrahim gave a necessary and sufficient condition for the irreducibility of the complex specialization of the reduced Gassner representation of the pure braid group $P_n$ \cite{Ab}. In our work, we mainly consider the irreducibility criteria of Albeverio- Rabanovich representation of the pure braid group $P_3$ with dimension three. In section 3, we write explicitly Albeverio- Rabanovich representation $\pi$ of the braid group $B_3$ of dimension $(2n+m)\times(2n+m)$ . In section 4, we let $m=n=1$ and we write the images of the generators S and J of $B_3$ under $\pi$. Then we deduce the images of $\sigma_1$ and $\sigma_2$, the standard generators of $B_3$, under $\pi$. After that, we construct the representation $\phi$, the restriction of $\pi$ on the pure braid group $P_3$. In section 5, we prove that $\phi$ is an irreducible representation of $P_3$ of dimension three. 
\section{Preliminaries}
\begin{definition}
\cite{Bi} The braid group on n strings, $B_{n}$, is the abstract group with
presentation $B_{n}$ =$\{\sigma _{1},...,\sigma _{n-1};$ $\sigma _{i}\sigma
_{i+1}\sigma _{i}=\sigma _{i+1}\sigma _{i}\sigma _{i+1}$, for $%
i=1,2,...,n-2, $ $\sigma _{i}\sigma _{j}=$ $\sigma _{j}\sigma _{i}$ if $%
\left\vert i-j\right\vert > 1\}.$\newline
The generators $\sigma _{1},...,\sigma _{n-1}$ are called the standard
generators of $B_{n}$.\newline
\end{definition}

\vskip 0.45 in $\overline{\text{{\small Key words and}}}${\small \ phrases. Braid group, pure braid group, irreducible.}\\  {\small \ \ \ \ Mathematics Subject Classification Primary. 20F36}\\
\begin{definition}
\cite{Bi} The pure braid group, $P_{n}$, is defined as the kernel of the
homomorphism $B_{n}$ $\longrightarrow S_{n}$, defined by $\sigma
_{i}\longrightarrow $ $(i,i+1),1\leq i\leq n-1.$ It has the following
generators: 
\[
A_{ij}=\sigma _{j-1}\sigma _{j-2}...\sigma _{i+1}\sigma _{i}^{2}\sigma
_{i+1}^{-1}...\sigma _{j-2}^{-1}\sigma _{j-1}^{-1},1\leq i,j\leq n 
\]
\end{definition}
\begin{definition}
A representation $\gamma :G\longrightarrow GL(V)$ is said to be irreducible
if it has no non trivial proper invariant subspaces.
\end{definition}
\section{ Albeverio- Rabanovich representation of the braid group $B_3$}
Consider  the  braid  group $B_3$  and its standard generatores $\sigma _1$ and $\sigma_2$. $B_3$ will be  generated  by J and S and  has  only one  relation $S^2= J^3$, where  $J= \sigma _1 \sigma_2$  and $S=\sigma_1 J$.
Denote the representation of $B_3$  by  $\pi$, where $\pi (S)= U $  and $\pi(J)= V$. Here U and V are $(2n+m) \times (2n+m)$ block matrices given by\\
$$U = 2 \left( \begin{array}{ccc}
A-I_n/2 & B & C \\
B^\ast & B^\ast A^{-1} B-I_n/2 & B^\ast A^{-1} C \\
C^\ast & C^\ast A^{-1} B & C^\ast A^{-1} C-I_m/2
\end{array} \right)$$

and 

$$V= diag (I_n,\beta I_n,\beta ^2 I_m ).$$
We have $\beta =\sqrt[3]{1}$  is a primitive root, $ 1\leq m \leq n$, A and B are $n\times n$  matrices and C is an $n\times m$ matrix. We also have $V^3=I_{2n+m}$. If $A = A^\ast$ 
and $BB^\ast + CC^\ast= A - A^2$, we get $U = U^\ast$ and $U^2=I_{2n+m}$. For more details, see \cite{Al}.\\

\begin{proposition} \cite{Al} If A and B are invertible, $rank(C)=m$, $B^\ast B$ is a diagonal matrix with simple spectrum and every entry of A is non-zero then Albeverio- Rabanovich representation is irreducible. 
\end{proposition}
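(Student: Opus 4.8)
The plan is to show that $\pi$ has no proper nonzero subspace invariant under both $U=\pi(S)$ and $V=\pi(J)$; since $B_3=\langle S,J\rangle$, this is exactly irreducibility of $\pi$. So suppose $W\neq 0$ is invariant under $U$ and $V$. Because $\beta$ is a primitive cube root of unity, $V=\operatorname{diag}(I_n,\beta I_n,\beta^2 I_m)$ has the three distinct eigenvalues $1,\beta,\beta^2$, with eigenspaces the three block coordinate subspaces $\mathbb{C}^n\times 0\times 0$, $0\times\mathbb{C}^n\times 0$ and $0\times 0\times\mathbb{C}^m$; hence $V$-invariance of $W$ forces a splitting $W=W_1\oplus W_2\oplus W_3$ with $W_1,W_2\subseteq\mathbb{C}^n$ and $W_3\subseteq\mathbb{C}^m$. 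The first task is to push this decomposition through $U$-invariance.

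Applying $U$ to vectors supported in a single block and reading off the three components from the explicit block form of $U$, one obtains $AW_1\subseteq W_1$, $BW_2\subseteq W_1$, $CW_3\subseteq W_1$, $B^{\ast}W_1\subseteq W_2$ and $C^{\ast}W_1\subseteq W_3$; the remaining block entries of $U$, those built from $A^{-1}$, then impose nothing new, since $A$ invertible gives $A^{-1}W_1=W_1$. Because $B$ and $B^{\ast}$ are invertible, $B^{\ast}W_1\subseteq W_2$ together with $BW_2\subseteq W_1$ forces $\dim W_1=\dim W_2$, $W_1=BW_2$ and $W_2=B^{\ast}W_1$; in particular $W_2=B^{\ast}BW_2$ is $B^{\ast}B$-invariant. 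Since $B^{\ast}B$ is diagonal with simple spectrum, its only invariant subspaces are the coordinate subspaces, so $W_2=\operatorname{span}\{e_i:i\in I\}$ for some $I\subseteq\{1,\dots,n\}$; consequently $W_1=BW_2$ is the span of the columns of $B$ indexed by $I$, and $W_1$ is $BB^{\ast}$-invariant.

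The next step is to invoke the defining relation $BB^{\ast}+CC^{\ast}=A-A^2$. Combined with $AW_1=W_1$, it shows $W_1$ is simultaneously invariant under $A$, under $BB^{\ast}$, and hence under $CC^{\ast}$. Conversely, one checks that from any subspace $W_1\subseteq\mathbb{C}^n$ invariant under both $A$ and $BB^{\ast}$, the subspaces $W_2:=B^{\ast}W_1$ (automatically a coordinate subspace) and $W_3:=C^{\ast}W_1$ reassemble into a subspace $W_1\oplus W_2\oplus W_3$ invariant under $U$ and $V$, which is nonzero and proper unless $W_1\in\{0,\mathbb{C}^n\}$; here one uses $\operatorname{rank}(C)=m$, so that $C^{\ast}$ is surjective and $W_1=\mathbb{C}^n$ forces $W_3=\mathbb{C}^m$. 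This brings the statement down to a purely linear-algebraic assertion: under the hypotheses, $A$ and $BB^{\ast}$ (equivalently, $A$ and $CC^{\ast}$) have no common invariant subspace besides $0$ and $\mathbb{C}^n$.

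This last reduction is where I expect the real work and the main obstacle to lie, and it is the point at which the hypothesis that every entry of $A$ is nonzero must be used, together with $A=A^{\ast}$, $A$ invertible, $BB^{\ast}$ positive definite, and $CC^{\ast}$ positive semidefinite of rank $m$ (which already forces the spectrum of $A$ into the open interval $(0,1)$). The line of attack I would take is: a common invariant subspace $W_1$ is, by the simple spectrum of $BB^{\ast}$, a span of columns of $B$, and it reduces the Hermitian triple $A$, $BB^{\ast}$, $CC^{\ast}$ simultaneously and orthogonally; one then plays the rank and positivity constraints contained in $BB^{\ast}=A-A^2-CC^{\ast}$ against the fact that all entries of $A$ are nonzero in order to exclude any proper $W_1$. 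For the case $m=n=1$ needed in this paper the step is immediate: $V$ itself has simple spectrum, so every $V$-invariant subspace of $\mathbb{C}^3$ is a coordinate subspace, and since $B$, $C$ and every entry of $A$ are nonzero, no proper coordinate subspace can be $U$-invariant, so $\pi$ is irreducible.
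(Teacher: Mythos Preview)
The paper does not give its own proof of this proposition: it is quoted from \cite{Al} and used as a black box. So there is no in-paper argument to compare your approach against.

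On the proposal itself, the reduction you carry out is correct and is in fact the standard one. The $V$-eigenspace decomposition $W=W_1\oplus W_2\oplus W_3$, the extraction of the block conditions $AW_1=W_1$, $B^{\ast}W_1\subseteq W_2$, $BW_2\subseteq W_1$, $C^{\ast}W_1\subseteq W_3$, $CW_3\subseteq W_1$, the use of the simple spectrum of $B^{\ast}B$ to force $W_2$ to be a coordinate subspace, and the converse reconstruction using $\operatorname{rank}(C)=m$ are all fine. The conclusion that irreducibility of $\pi$ is equivalent to the pair $(A,BB^{\ast})$ having no common proper nonzero invariant subspace of $\mathbb{C}^n$ is valid.

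The genuine gap is exactly where you flag it. For general $m,n$ you do not prove that $A$ and $BB^{\ast}$ share no proper invariant subspace; you only outline a ``line of attack'' via positivity of $BB^{\ast}=A-A^2-CC^{\ast}$ and the nonvanishing of the entries of $A$. That sketch is not an argument: one has to explain concretely why a Hermitian $A$ with all entries nonzero cannot preserve a proper subspace spanned by a subset of the columns of $B$, and nothing you wrote forces this. This step is the actual content of the result in \cite{Al}, and it does not fall out of the bookkeeping you have done. Your treatment of the case $m=n=1$ is complete and correct, since there $V$ already has simple spectrum and the nonvanishing of $B$, $C$ and $A$ kills every coordinate subspace; but that is a much easier special case and does not close the gap in the general statement.
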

\section{Restriction of albeverio- Rabanovich representation $\pi$ on the pure braid group $P_3$}
Consider the braid group $B_3$ generated by S and V. Take $n = m = 1$ with B and C non zero real numbers and A is specialized to the value $\frac{1}{2}$. This implies that $B = B^\ast$, $C = C^\ast$ and $B^2 + C^2 = A - A^2$. For $A = \frac{1}{2}$, we have $B^2 = \frac{1}{4}- C^2$.
We require $-\frac{1}{2} < C < \frac{1}{2}$. We substitute  the value of B in U. So, we get\\

$$\pi(S) = U = 2 \left( \begin{array}{ccc}
0 & \sqrt{\frac{1}{4}-C^2} & C\\
\sqrt{\frac{1}{4}-C^2} & -2C^2 & 2C\sqrt{\frac{1}{4}-C^2}\\
 C & 2C\sqrt{\frac{1}{4}-C^2} & 2C^2-\frac{1}{2}
\end{array} \right)$$\\
 and 

  $$\pi(J) = V = \left( \begin{array}{ccc}
1 & 0 & 0\\
0 & \beta & 0  \\
0 & 0 & \beta^2
\end{array} \right)$$

\vskip .1in

Here, $\beta$ is a 3rd root of unity. That is , $\beta^3=1$.
\begin{proposition} The images of the standard generators $\sigma_1$ and $\sigma_2$ of the braid group $B_3$ under $\pi$ are given by:\\ 
$$\pi(\sigma_1) = \left( \begin{array}{ccc}
0 & \frac{2\sqrt{\frac{1}{4}-C^2}}{\beta} & \frac{2C}{\beta^2}\\
2\sqrt{\frac{1}{4}-C^2}& \frac{-4C^2}{\beta} & \frac{4C\sqrt{\frac{1}{4}-C^2}}{\beta^2}\\
2C & \frac{4C\sqrt{\frac{1}{4}-C^2}}{\beta} & \frac{4C^2-1}{\beta^2}
\end{array} \right)$$\\
and \\
$$\pi(\sigma_2) = \left( \begin{array}{ccc}
0 & 2\beta\sqrt{\frac{1}{4}-C^2} & 2\beta^2C\\
2\beta\sqrt{\frac{1}{4}-C^2} & -4\beta^2C^2 & 4C\sqrt{\frac{1}{4}-C^2}\\
2\beta^2C & 4C\sqrt{\frac{1}{4}-C^2} & \beta(4C^2-1)
\end{array} \right),$$\\
where C is non-zero real number such that $-\frac{1}{2} < C < \frac{1}{2}$, $\beta \neq 1$, and $\beta^3=1$.
\end{proposition}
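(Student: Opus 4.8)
The plan is to write the standard generators $\sigma_1$ and $\sigma_2$ as words in $S$ and $J$ and then simply apply the homomorphism $\pi$, which reduces the statement to two matrix products. Since $J=\sigma_1\sigma_2$ and $S=\sigma_1 J$, one solves directly: $\sigma_1=SJ^{-1}$, and then $\sigma_2=\sigma_1^{-1}J=(SJ^{-1})^{-1}J=JS^{-1}J$. Applying $\pi$ and writing $U=\pi(S)$, $V=\pi(J)$ gives
\[
\pi(\sigma_1)=UV^{-1},\qquad \pi(\sigma_2)=VU^{-1}V .
\]
So everything comes down to identifying $U^{-1}$ and $V^{-1}$ and performing the two multiplications with the specialized matrices of Section 4.

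The first step is to simplify the inverses. Because $V=\mathrm{diag}(1,\beta,\beta^2)$ with $\beta^3=1$, we have $V^{-1}=\mathrm{diag}(1,\beta^2,\beta)$. For $U$, the specialization $n=m=1$, $A=\tfrac12$, $B=\sqrt{\tfrac14-C^2}$, $C$ real satisfies $A=A^\ast$ and $B^2+C^2=\tfrac14=A-A^2$, so by the general properties of the construction recalled in Section 3 (or by a direct $3\times 3$ check using $B^2+C^2=\tfrac14$) we get $U^2=I_3$; hence $U^{-1}=U$ and
\[
\pi(\sigma_1)=UV^{-1},\qquad \pi(\sigma_2)=VUV .
\]

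The second step is the computation itself. Right multiplication of $U$ by the diagonal matrix $V^{-1}$ rescales the columns of $U$ by $1,\beta^{-1},\beta^{-2}$ respectively, which (using $\beta^{-1}=\beta^2$) produces exactly the claimed matrix $\pi(\sigma_1)$. Similarly, in $VUV$ the $(i,j)$ entry of $U$ gets multiplied by $\beta^{i-1}\beta^{j-1}=\beta^{i+j-2}$; reducing the exponents modulo $3$ (so $\beta^3=1$, $\beta^4=\beta$) gives precisely the stated $\pi(\sigma_2)$. Finally the standing hypotheses carry over unchanged: $-\tfrac12<C<\tfrac12$ guarantees $\tfrac14-C^2>0$ so the radicals are genuine real numbers, $C\neq 0$ is preserved, and $\beta\neq 1$ with $\beta^3=1$.

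There is no deep obstacle here; the proposition is essentially a bookkeeping exercise. The only two points that require care are (i) recording that $\sigma_2=JS^{-1}J$ rather than a word involving $S$ directly, which is what forces the use of $U^{-1}$ and hence the observation that $U$ is an involution, and (ii) keeping the powers of the root of unity straight when multiplying by the diagonal matrices $V$ and $V^{-1}$ — in particular remembering $\beta^{-1}=\beta^2$ and $\beta^4=\beta$ — since the displayed answers are written with $\beta$ appearing in both numerators and denominators.
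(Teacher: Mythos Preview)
Your argument is correct and is exactly the computation the paper has in mind: the proposition is stated there without an explicit proof, as the result of the straightforward matrix calculation $\pi(\sigma_1)=UV^{-1}$ and $\pi(\sigma_2)=VU^{-1}V=VUV$ coming from $\sigma_1=SJ^{-1}$, $\sigma_2=JS^{-1}J$, together with $U^2=I_3$. There is nothing to add.
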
 
Now, apply Albeverio- Rabanovich representation $\pi$ on the pure braid group $P_3$. We get the following representation of dimension d = 3
\begin{proposition}
Let $\phi$ be the restriction of Albeverio- Rabanovich representation $\pi$ on the pure braid group $P_3$. Thus $\phi$ is defined as follows:\\

$$\phi(A_{12}) = \left( \begin{array}{ccc}
I & J & \beta P\\
\beta J & M & \beta^2Q\\
P & Q & R
\end{array} \right) \text{and}\ \phi(A_{23}) = \left( \begin{array}{ccc}
I & \beta^2J & \beta^2P\\
\beta^2J & M & \beta Q\\
\beta^2P & \beta Q & R
\end{array} \right),$$\\
where\\
$I=4\beta C^2(1-\beta)+\beta^2$, $J=8C^2(1-\beta)\sqrt{\frac{1}{4}-C^2}$, $P=2\beta C(1-\beta)(4C^2-1)$\\
$M=\beta^2(-4c^2+1)+16C^4(\beta-1)+4C^2$, $Q=4C(1-\beta)(4C^2+\beta^2)\sqrt{\frac{1}{4}-C^2}$,\\
$R=4\beta C^2-16C^4+4C^2+16\beta^2C^4-8\beta^2C^2+\beta^2$.\\ \\
Also,  $C$ is a non-zero real number such that $-\frac{1}{2} < C < \frac{1}{2}$, $\beta \neq 1$, and $\beta^3=1$.\\
As for $\phi(A_{13})=\sigma_2\sigma_1^2\sigma_2^{-1}$, we will not need it in our proof.
\end{proposition}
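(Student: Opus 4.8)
The plan is first to identify the pure braid generators that actually occur in the statement. Taking $i=1,\ j=2$ and $i=2,\ j=3$ in Definition 2 with $n=3$, the conjugating strings $\sigma_{j-1}\cdots\sigma_{i+1}$ are empty, so that $A_{12}=\sigma_1^{2}$ and $A_{23}=\sigma_2^{2}$. Hence $\phi(A_{12})=\pi(\sigma_1)^{2}$ and $\phi(A_{23})=\pi(\sigma_2)^{2}$, and the whole proposition reduces to squaring the two $3\times 3$ matrices furnished by Proposition 3 and matching the resulting nine entries against $I,J,P,M,Q,R$.

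To handle $\pi(\sigma_1)^{2}$ I would abbreviate $b=\sqrt{\tfrac14-C^{2}}$, so that the construction of $\pi$ supplies the two simplifying identities $b^{2}=\tfrac14-C^{2}$ and $\beta^{3}=1$ (equivalently $\beta^{-1}=\beta^{2}$ and $\beta^{-2}=\beta$). Multiplying the matrix of $\pi(\sigma_1)$ by itself and substituting these, every entry collapses to a polynomial in $C$ and $\beta$; for instance the $(1,1)$ entry is $4b^{2}\beta^{2}+4C^{2}\beta=(1-4C^{2})\beta^{2}+4C^{2}\beta=\beta^{2}+4\beta C^{2}(1-\beta)=I$, and the other eight entries are verified in the same routine fashion. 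The only positions that call for a little care are the diagonal entries $(2,2)$ and $(3,3)$, where cross terms of the shape $(-4C^{2})(4C^{2})$ and $(4Cb)^{2}=16C^{2}b^{2}$ generate the quartic expressions $16C^{4}$ occurring in $M$ and $R$.

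For $\pi(\sigma_2)^{2}$ I would not repeat the multiplication but instead exploit the identity $\sigma_2=(\sigma_1\sigma_2)\sigma_1(\sigma_1\sigma_2)^{-1}=J\sigma_1 J^{-1}$ in $B_3$, which is a one-line consequence of the braid relation $\sigma_1\sigma_2\sigma_1=\sigma_2\sigma_1\sigma_2$. Applying $\pi$ gives $\pi(\sigma_2)=V\pi(\sigma_1)V^{-1}$, whence $\phi(A_{23})=\pi(\sigma_2)^{2}=V\,\phi(A_{12})\,V^{-1}$. Since $V=\operatorname{diag}(1,\beta,\beta^{2})$, conjugation by $V$ simply multiplies the $(i,j)$ entry of $\phi(A_{12})$ by $\beta^{\,i-j}$; performing this on the entries $I,J,\beta P,\beta J,M,\beta^{2}Q,P,Q,R$ of $\phi(A_{12})$ produces $I,\beta^{2}J,\beta^{2}P,\beta^{2}J,M,\beta Q,\beta^{2}P,\beta Q,R$, which is exactly the matrix $\phi(A_{23})$ claimed. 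This also makes clear the only structural feature of the two matrices, namely the shifted powers of $\beta$ attached to the off-diagonal blocks.

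I do not expect a conceptual obstacle: the argument is a bounded computation. The main practical difficulty is purely bookkeeping, namely keeping all powers of $\beta$ reduced modulo $3$ throughout the squaring and correctly assembling the degree-four terms in $C$ in the entries $M$ and $R$. A convenient running sanity check is that, since $U=U^{\ast}$, $V$ is unitary and $U^{2}=V^{3}=I_{3}$, the matrices $\pi(\sigma_1)$ and $\pi(\sigma_2)$ are invertible, so each computed square must again be invertible; this would immediately flag any algebraic slip that causes a row or column to degenerate.
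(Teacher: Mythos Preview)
Your proposal is correct. The paper does not supply a separate proof of this proposition; it is presented as the outcome of the straightforward computation $\phi(A_{12})=\pi(\sigma_1)^2$, $\phi(A_{23})=\pi(\sigma_2)^2$, which is exactly what you carry out. Your use of the identity $\sigma_2=J\sigma_1J^{-1}$ to obtain $\phi(A_{23})=V\,\phi(A_{12})\,V^{-1}$ is a clean shortcut that the paper does not make explicit, and it both avoids a second squaring and explains at a glance why the two matrices differ only by the $\beta$-shifts on the off-diagonal entries.
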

\section{Irreducibility of Albeverio- Rabanovich representation of the pure braid group $P_3$ of dimension three}
In this section, we prove that Albeverio- Rabanovich representation $\phi$ of the pure braid group $P_3$ of dimension three is irreducible.\\
\begin{theorem}
Albeverio- Rabanovich representation $\phi:P_{3}\longrightarrow GL_{3}(\mathbb{C})$ is irreducible
\end{theorem}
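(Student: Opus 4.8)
The plan is to show that the only subspaces of $\mathbb{C}^3$ invariant under both $\phi(A_{12})$ and $\phi(A_{23})$ are $\{0\}$ and $\mathbb{C}^3$. Since a proper nontrivial invariant subspace would have dimension $1$ or $2$, and since a representation is irreducible if and only if its dual (contragredient) is, it suffices to rule out a common invariant line for the pair $\{\phi(A_{12}),\phi(A_{23})\}$ and for the pair of transpose-inverses; equivalently, one can just rule out dimension $1$ and dimension $2$ directly. I would proceed by the eigenvector method: any $\phi(A_{12})$-invariant line is spanned by an eigenvector of $\phi(A_{12})$, so I would first compute the characteristic polynomial and eigenvectors of $\phi(A_{12})$, then impose that such an eigenvector also be an eigenvector of $\phi(A_{23})$, and show this forces $C=0$ (excluded) or $\beta=1$ (excluded).

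Concretely, the first step is to record that $\det\phi(A_{12})\neq 0$ (it lies in $GL_3$, being a product of the invertible matrices $\pi(\sigma_i)$), and to compute $\Delta := \phi(A_{23})\phi(A_{12})^{-1}$ or better the commutator-type quantity $\phi(A_{12})\phi(A_{23}) - \phi(A_{23})\phi(A_{12})$. A cleaner route: observe from the given formulas that $\phi(A_{23})$ is obtained from $\phi(A_{12})$ by conjugation-like replacements of $\beta$ by $\beta^2$ in the off-diagonal blocks while the diagonal entries $I, M, R$ are unchanged. A common invariant line $\mathbb{C}v$ must satisfy $\phi(A_{12})v = \lambda v$ and $\phi(A_{23})v = \mu v$, hence $(\phi(A_{12})-\phi(A_{23}))v = (\lambda-\mu)v$, i.e. $v$ is an eigenvector of the difference matrix $D := \phi(A_{12})-\phi(A_{23})$. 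From the formulas, $D$ has zero diagonal and entries built from $(\beta-\beta^2)J$, $(\beta-\beta^2)P$, $(\beta^2-\beta)Q$, etc.; since $\beta-\beta^2 \neq 0$ and $J,P,Q$ are each nonzero for $C\in(-\tfrac12,\tfrac12)\setminus\{0\}$ (here the constraint $-\tfrac12<C<\tfrac12$ guarantees $\sqrt{\tfrac14-C^2}$ is real and nonzero, and $4C^2-1\neq 0$), the matrix $D$ is a nonzero off-diagonal matrix whose structure I would analyze to pin down its eigenvectors explicitly, then substitute back into $\phi(A_{12})v=\lambda v$.

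After eliminating invariant lines, I would handle dimension-$2$ invariant subspaces by the same argument applied to the exterior square, or more simply by noting that a $2$-dimensional invariant subspace $W$ corresponds to a $1$-dimensional invariant subspace of the dual representation $\phi^{*}(g) = (\phi(g)^{-1})^{T} = (\phi(g)^{T})^{-1}$; since the generators here are (up to the scalar and root-of-unity bookkeeping) built to be unitary-ish, the transpose operation has a controlled effect and the same eigenvector computation rules out a common invariant line of $\phi^{*}(A_{12}), \phi^{*}(A_{23})$. Alternatively, I would directly test whether the span of any eigenvector pair of $\phi(A_{12})$ can be $\phi(A_{23})$-invariant, which is again a finite computation once the eigendata of $\phi(A_{12})$ is known.

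The main obstacle I anticipate is purely computational bulk: the entries $I,J,M,P,Q,R$ are messy polynomials in $C$ and $\beta$ (with $\beta^2 = -1-\beta$ available for simplification), so computing the characteristic polynomial of $\phi(A_{12})$ and its eigenvectors, and then checking the compatibility condition with $\phi(A_{23})$, will require careful algebra; the key simplification to exploit is that $\beta^2+\beta+1=0$ and that the two matrices differ only through $\beta \leftrightarrow \beta^2$, which should collapse the compatibility equations to a short list of factors, each of which is nonzero under the standing hypotheses $C\neq 0$, $-\tfrac12<C<\tfrac12$, $\beta\neq 1$. Provided none of those factors can vanish, no common eigenvector exists, no proper invariant subspace exists, and $\phi$ is irreducible.
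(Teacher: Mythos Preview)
Your plan is essentially the paper's own strategy: reduce to ruling out a common one-dimensional invariant subspace for $\phi(A_{12})$ and $\phi(A_{23})$, then case-split on which coordinates of a spanning vector $v$ are nonzero. The paper handles the dimension-$2$ case exactly via your ``unitary-ish'' remark: it invokes that $\phi$ is unitary (inherited from $\pi$), so the orthogonal complement of a $2$-dimensional invariant subspace is a $1$-dimensional one, and only the line case needs analysis. Your difference-matrix idea $D=\phi(A_{12})-\phi(A_{23})$ is precisely what the paper uses, in disguise, to dispatch the six ``degenerate'' cases where at least one coordinate of $v$ vanishes: taking suitable $\beta$-weighted combinations of $\phi(A_{12})v$ and $\phi(A_{23})v$ isolates a single off-diagonal entry ($J$, $P$, or $Q$) and forces it to be zero, contradicting $C\neq 0$, $|C|<\tfrac12$, $\beta\neq 1$.

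Where your expectation diverges from reality is the generic case $v=e_1+\alpha_1 e_2+\alpha_2 e_3$. You anticipate the compatibility conditions will ``collapse to a short list of factors, each of which is nonzero under the standing hypotheses.'' In the paper this does \emph{not} happen: after eliminating $\alpha_2$ and $\alpha_1$ the authors arrive at two single-variable polynomial constraints in $C$ of degrees $14$ and $16$ (their equations~(29) and~(30)), and the contradiction is obtained by observing numerically that these have no common root in $(-\tfrac12,\tfrac12)\setminus\{0\}$. So your outline is correct, but be prepared for the endgame to be a brute-force resultant/common-root check rather than a clean factorization; the symmetry $\beta\leftrightarrow\beta^2$ helps organize the algebra but does not by itself kill the generic case.
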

\begin{proof}
To get contradiction, suppose that this representation $\phi
:P_{3}\longrightarrow GL_{3}(C)$ is reducible .That is, there exists a
proper non-zero invariant subspace $S$, of dimension $1$ or $2$. It is clear that $\phi$ is unitary \cite{Al}. For a unitary representation, the orthogonal complement of a proper invariant subspace is again a proper invariant subspace. We then assume that $S$ is one dimensional subspace generated by a vector $v$. We use $e_{1},\ e_{2}$,\ and $e_{3}$ as the
canonical basis of $\mathbb{C}^{3}$. Let $\alpha$, $\alpha_1$ and $\alpha_2$ be non-zero complex numbers. It is easy to see that I,J, and Q are different from zero. We consider seven cases.
\begin{itemize}
 \item 
Case 1   Let $v=e_1$, it follows that $\beta\phi(A_{12})e_1-\phi(A_{23})e_1\in S$.\\
 Then $I(\beta-1)e_1+\beta P(1-\beta)e_3 \in S$. Hence $e_3 \in S$. \\
Also, we have $\beta^2\phi(A_{12})e_1-\phi(A_{23})e_1 \in S$. Then $I(\beta^2-1)e_1+J(1-\beta^2)e_2 \in S$.\\
 Hence $e_2 \in S$. Thus $S=\mathbb{C}^3$, a contradiction.\\
\item
Case 2   Let $v=e_2$, it follows that $\beta^2\phi(A_{12})e_2-\phi(A_{23})e_2\in S$.\\
Then $M(\beta^2-1)e_2+\beta Q(\beta-1)e_3 \in S$. Hence $e_3 \in S$.\\
Also, we have $\beta\phi(A_{12})e_2-\phi(A_{23})e_2 \in S$. Then $\beta J(1-\beta)e_1+M(\beta-1)e_2 \in S$.\\
Hence $e_1 \in S$. Thus $S=\mathbb{C}^3$, a contradiction.\\
\item
Case 3   Let $v=e_3$, it follows that $\beta\phi(A_{12})e_3-\phi(A_{23})e_3\in S$.\\
Then $Q(1-\beta)e_2+R(\beta-1)e_3 \in S$. Hence $e_2 \in S$.\\
Also, we have $\phi(A_{12})e_3-\beta\phi(A_{23})e_3 \in S$. Then $P(\beta-1)e_1+R(1-\beta)e_3 \in S$.\\
Hence $e_1 \in S$. Thus $S=\mathbb{C}^3$, a contradiction.\\
\item
Case 4   Let $v=e_1+\alpha e_2$, it follows that $\phi(A_{12})v=a_1v$ for some $a_1\in\mathbb{C}^*$. Then\\ $(I+\alpha J)e_1+(\beta J+\alpha M)e_2+(P+\alpha Q)e_3=a_1(e_1+\alpha e_2)$. 
Hence \begin{equation}\label{1} P+\alpha Q=0. 
\end{equation}
Also, there exists $a_2\in\mathbb{C}^*$ such that $\phi(A_{23})v=a_2v$. Then\\ $(I+\alpha\beta^2 J)e_1+(\beta^2 J+\alpha M)e_2+(\beta^2P+\alpha\beta Q)e_3=a_2(e_1+\alpha e_2)$. Hence \begin{equation}\label{2} \beta^2P+\alpha\beta Q=0. \end{equation}
Using (\ref{1}) and (\ref{2}) we get $P=0$, a contradiction.\\
\item
Case 5   Let $v=e_1+\alpha e_3$, it follows that $\phi(A_{12})v=a_1v$ for some $a_1\in\mathbb{C}^*$. Then \begin{equation}\label{3} J+\alpha \beta Q=0. \end{equation} Also, there exists $a_2\in\mathbb{C}^*$ such that $\phi(A_{23})v=a_2v$. Then \begin{equation}\label{4} \beta^2J+\alpha\beta Q=0. \end{equation} Using (\ref{3}) and (\ref{4}) we get $J=0$, a contradiction.\\
\item
Case 6   Let $v=e_2+\alpha e_3$, it follows that $\phi(A_{12})v=a_1v$ for some $a_1\in\mathbb{C}^*$. Then \begin{equation}\label{5} J+\alpha \beta P=0. \end{equation} Also, there exists $a_2\in\mathbb{C}^*$ such that $\phi(A_{23})v=a_2v$. Then \begin{equation}\label{6} \beta^2J+\alpha\beta^2 P=0. \end{equation} Using (\ref{5}) and (\ref{6}) we get $P=0$, a contradiction.\\
\item
Case 7   Let $v=e_1+\alpha_1 e_2+\alpha_2 e_3$, it follows that $\frac{1}{\beta-1}(\phi(A_{23})v-\phi(A_{12})v)=n_1v$ for some $n_1\in\mathbb{C}^*$. Then \\ \\ $\left(\begin{array}{c}
\alpha_1J(\beta+1)+\alpha_2\beta P\\
\beta J-\alpha_2\beta Q\\
P(\beta+1)+\alpha_1Q
\end{array}\right)=n_1\left(\begin{array}{c} 1\\ \alpha_1\\ \alpha_2\end{array}\right)$.\\
\vskip .1in
 Consider the following equations:
\begin{equation}\label{7} \alpha_1J(\beta+1)+\alpha_2\beta P=n_1 \end{equation}
\begin{equation}\label{8} \beta J-\alpha_2\beta Q=n_1\alpha_1 \end{equation}
\begin{equation}\label{9} P(\beta+1)+\alpha_1Q=n_1\alpha_2 \end{equation}
\\ Now, using $(\ref{7})$ and $(\ref{8})$, we get $\alpha_1^2J(\beta+1)+\alpha_1\alpha_2\beta P-\beta J+\alpha_2\beta Q=0$. 
Hence,\\ \begin{equation}\label{10} (\alpha_1P+Q)\alpha_2=\alpha_1^2J\beta+J. \end{equation}
Using (\ref{7}) and (\ref{10}) we get
\begin{equation}\label{11} (\alpha_1P+Q)n_1=-\beta^2J\alpha_1(\alpha_1P+Q)+\beta PJ(\alpha_1^2\beta+1). \end{equation}
 After multiplying (\ref{9}) by $(\alpha_1P+Q)^2$ and using (\ref{10}) and (\ref{11}), we get:
\begin{equation}\label{12}(P^2+J^2)Q\alpha_1^3+(-\beta^2P^2+2Q^2-\beta^2J^2)P\alpha_1^2+(-2\beta^2P^2+\beta^2J^2+Q^2)Q\alpha_1-\beta P(\beta Q^2+J^2)=0. \end{equation} \\
Similarly, there exists $n_2\in\mathbb{C}$ such that $\frac{1}{1-\beta^2}(\phi(A_{23})v-\beta^2\phi(A_{12})v)=n_2v$. Then \\ \\ $\left(\begin{array}{c}
I-\alpha_2 P\\
-J+\alpha_1M\\
\frac{\alpha_1Q\beta}{\beta+1}+\alpha_2R
\end{array}\right)=n_2\left(\begin{array}{c} 1\\ \alpha_1\\ \alpha_2\end{array}\right)$.\\
\vskip .1in
 Consider the following equations:\\
\begin{equation}\label{13} I-\alpha_2 P=n_2 \end{equation}
\begin{equation}\label{14} -J+\alpha_1M=n_2\alpha_1 \end{equation}
\begin{equation}\label{15} \frac{\alpha_1Q\beta}{\beta+1}+\alpha_2R=n_2\alpha_2 \end{equation}
\\The equations (\ref{13}) and (\ref{14}) give \begin{equation}\label{16} \alpha_1\alpha_2P=\alpha_1I+J-\alpha_1M \end{equation}
Now, using (\ref{14}), (\ref{15}), and  (\ref{16}), we get
\begin{equation}\label{17} -\beta^2PQ\alpha_1^3+(M-I)(M-R)\alpha_1^2+J(R-2M+I)\alpha_1+J^2=0. \end{equation}
Likewise, there exists $n_3\in\mathbb{C}$ such that $\frac{1}{1-\beta}(\phi(A_{23})v-\beta\phi(A_{12})v)=n_3v$. Then \\ \\ $\left(\begin{array}{c}
I-\beta\alpha_1 J\\
\alpha_1M-\alpha_2Q\\
-\beta P+\alpha_2R
\end{array}\right)=n_3\left(\begin{array}{c} 1\\ \alpha_1\\ \alpha_2\end{array}\right)$.\\ \\
 \vskip .1in
 Consider the following equations:
\begin{equation}\label{18} I-\beta\alpha_1 J=n_3 \end{equation}
\begin{equation}\label{19} \alpha_1M-\alpha_2Q=n_3\alpha_1 \end{equation}
\begin{equation}\label{20} -\beta P+\alpha_2R=n_3\alpha_2 \end{equation}
The equations (\ref{18}) and (\ref{19}) give
\begin{equation}\label{21} \alpha_2Q=\beta J\alpha_1^2+(M-I)\alpha_1. \end{equation}
Using (\ref{18}), (\ref{20}), and (\ref{21}) we get
\begin{equation}\label{22} \beta^2J^2\alpha_1^3+\beta J(R-2I+M)\alpha_1^2+(I-M)(I-R)\alpha_1-\beta PQ=0. \end{equation}

Now, we multiply (\ref{17}) by $J^2$, (\ref{22}) by $PQ$. After adding them, we get
\begin{equation}\label{23} a_1\alpha_1^2+b_1\alpha_1+c_1=0. \end{equation}

Also, we multiply (\ref{22}) by $Q(p^2+J^2)$, (\ref{12}) by $-\beta^2J^2$. After adding them, we get
\begin{equation}\label{24} a_2\alpha_1^2+b_2\alpha_1+c_2=0, \ where \end{equation}\\
$a_1=64\beta C^6(4C^2-1)^3(\beta-1)^4(4\beta C^2+\beta+2)$,\\
$b_1=128C^6(4C^2-1)^2\sqrt{\frac{1}{4}-C^2}(\beta-1)^4(8\beta C^2-\beta+4C^2-16C^4-1)$,\\
$c_1=-16C^4(4C^2-1)^2(\beta-1)^4(\beta-12\beta C^2+32\beta C^4-8C^2+32C^4-64C^6)$,\\
$a_2=128\beta C^7(4C^2-1)^3(\beta-1)^5(\beta-8\beta C^2+16\beta C^4+16C^4)$,\\
$b_2=256C^7\sqrt{\frac{1}{4}-C^2}(\beta^2+4C^2)(\beta-1)^5((4C^2-1)^2(16\beta C^4+4C^2+\beta-1)$,\\
$c_2=8\beta C^3(\beta-1)^3((4C^2-1)^2(768\beta C^8-192\beta C^6-16\beta C^4-8C^2+\beta+1)$.\\\\
The equations (\ref{23}) and (\ref{24}) give 
\begin{equation}\label{25} \alpha_1=\frac{K}{8C^2\sqrt{\frac{1}{4}-C^2}(-4C^2+3\beta+2)}, \ where \end{equation}\\
$K=-\beta+12\beta C^2-32\beta C^4+8C^2-32C^4+64C^6.$\\\\
Using (\ref{21}) and (\ref{25}) we get\\\\
\begin{equation}\label{26} \alpha_2=\frac{-K(\beta+8\beta C^2-48\beta C^4+64\beta C^6-4C^2-16C^4+64C^6+1)}{8C^3(4C^2-1)(\beta^2+4C^2)(-4C^2+3\beta+2)^2}. \end{equation}
\\Now, using (\ref{7}), (\ref{25}), and (\ref{26}) we obtain:\\
\begin{equation}\label{27} n_1=\frac{-K(\beta-1)(\beta+32\beta C^4+16C^4-64C^6)}{4C^2(\beta^2+4C^2)(-4C^2+3\beta+2)^2}. \end{equation}
After substituting the equations (\ref{25}), (\ref{26}), and (\ref{27}) in (\ref{8}), we obtain equation (28) \\\\
$\beta(256C^4-28C^2-128C^6-12032C^8+75776C^{10}-94208C^{12}-229376C^{14}+196608C^{16})+304C^4-2432C^6+5632C^8+41984C^{10}-208896C^{12}+98304C^{14}+196608C^{16}-1=0$.\\\\
We substitute $\beta=\frac{-1}{2}\pm \frac{\sqrt{3}}{2}i$. We then take the imaginary part and real part in (28) to be zero. Thus, we obtain the equations (29) and (30) respectively\\\\
$4C^2(4C^2-1)(12288C^{12}-11264C^{10}-8704C^8+2560C^6-112C^4-36C^2+7)=0$.\\\\
$98304C^{16}+212992C^{14}-161792C^{12}+4096C^{10}+11648C^8-2368C^6+176C^4+14C^2-1=0$. \\\\
Equation (29) has 10 rejected solutions and two accepted solutions which are $\pm 0.43$ (rounded to the nearest hundredth).\\
Equation (30) has 14 rejected solutions and two accepted solutions which are $\pm 0.23$ (rounded to the nearest hundredth).\\
Thus we have no common solutions between (29) and (30), a contradiction.
\end{itemize}
\end{proof}

\bigskip
\footnotesize 
Hasan A. Haidar, \textsc{Department of Mathematics and Computer Science, Beirut Arab University, P.O. Box 11-5020, Beirut, Lebanon}\par\nopagebreak
\textit{E-mail address}: \texttt{hah339@student.bau.edu.lb}
 
Mohammad N. Abdulrahim, \textsc{Department of Mathematics and Computer Science, Beirut Arab University, P.O. Box 11-5020, Beirut, Lebanon}\par\nopagebreak
\textit{E-mail address}: \texttt{mna@bau.edu.lb}

\begin{thebibliography}{}
\bibitem{Ab} M.Abdulrahim, Complex specializations of the reduced Gassner representation of the pure braid group, Proceedings of the Amrican mathematical society, 125(6), 1617-1624, 1997.

\bibitem{Al}  S.Albeverio, S.Rabanovich, On a class of unitary representations of the braid groups $B_3$ and $B_4$, Bull.Sci.Math, 153 , 35-56, 2019.

\bibitem{Bi} J.Birman, Braids, Link and Mapping Class Groups. Annals of Mathematical Studies, Princeton University Press, New Jersy, 82, 1975.

\bibitem{Bu}  W. Burau, Uber Zopfgruppen und gleischsinning verdrillte Verkettungen, Abh. Math. Sem. Ham. II, 171-178, 1936.

\bibitem{F} E. Formanek, Braid group representations of low degree, Proc. London Math Soc, 73(3), 279-322, 1996.

\end{thebibliography}
\end{document}